\newtheorem{theorem}{Theorem}[section]
\newtheorem{lemma}[theorem]{Lemma}
\newtheorem{definition}[theorem]{Definition}
\title{A Hanani-Tutte Theorem for Cycles on Surfaces}
\author{Sutanoya Chakraborty 
\and Arijit Ghosh}
\date{}
\begin{document}

\maketitle

\begin{abstract}
Given a drawing $D$ of a graph $G$, we define the crossing number between any two cycles $C_{1}$ and $C_{2}$ in $D$ to be the number of crossings that involve at least one edge from each of $C_1$ and $C_2$ except the crossings between edges that are common to both cycles.
We show that if the crossing number between every two cycles in $G$ is even in a drawing of $G$ on the plane, then there is a planar drawing of $G$.
This result can be extended to arbitrary surfaces. We also establish an equivalence between our result and a fundamental result, due to Cairns-Nikolayevsky and Pelsmajer-Schaefer-\v{S}tefankovi\v{c}, about drawing graphs on surfaces, and derive the Loebl-Masbaum theorem from it. 
\end{abstract}
\section{Introduction}

The Strong Hanani-Tutte Theorem~\cite{Hanani34,Tutte70} is a fundamental result in graph theory. The theorem states that if for a given graph there is a drawing of the graph in the plane such that every pair of {\em independent edges}\footnote{A pair of edges in a graph are called independent edges if they don't share a vertex.} crosses an even number of times then the graph is a planar graph. There is also a weaker version of Hanani-Tutte Theorem (called Weak Hanani-Tutte Theorem) which states that if there is a drawing of a graph in the plane such that any two edges cross an even number of times then the graph is a planar graph. Pelsmajer, Schaefer and \v{S}tefankovi\v{c}~\cite{PelsmajerSS07} gave an elegant geometric proof of the weaker variant of Hanani-Tutte Theorem, and in a follow-up work extended the result for arbitrary surfaces, see~\cite{PelsmajerSS09}.
\begin{theorem}[Pelsmajer, Schaefer and \v{S}tefankovi\v{c}~\cite{PelsmajerSS09}]
    Let $G=(V,E)$ be a graph.
    If there is a drawing of $G$ on a surface $S$ where every pair of edges of $G$ cross an even number of times, then $G$ can be embedded on $S$ maintaining its embedding scheme.
    \label{thm:HT_for_surfaces}
\end{theorem}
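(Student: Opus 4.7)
The plan is to prove the theorem by a sequence of local redrawings that gradually eliminate crossings while keeping both the rotation at each vertex and, on a non-orientable surface, the edge signatures intact. The central tool is the following redrawing lemma: given a drawing $D$ of $G$ on $S$, a vertex $v$, and an edge $e$ not incident to $v$ such that every edge incident to $v$ crosses $e$ an even number of times in $D$, one can modify $D$ inside an arbitrarily small disk neighbourhood of $v$ to obtain a new drawing in which (i) no edge incident to $v$ crosses $e$, (ii) the parity of crossings of every other pair of edges is unchanged, and (iii) the rotation at $v$ and all edge signatures are preserved.

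To prove this redrawing lemma, I would pick a small disk $U$ around $v$ meeting each edge at $v$ in a single arc and no other edge, so that the edges at $v$ leave $U$ through distinct points on $\partial U$ in the cyclic order prescribed by the rotation at $v$. For each edge $f$ at $v$ that crosses $e$, I would drag the endpoint of $f$ on $\partial U$ along a path that runs parallel to $e$, loops around one endpoint of $e$, and returns; such a move changes the $f$--$e$ crossing parity by one, and, being supported in an arbitrarily small tubular neighbourhood of $e \cup \partial U$, affects no other crossing parity. Doing this the appropriate parity of times on each offending edge, one kills all crossings with $e$ while returning each endpoint to its original position on $\partial U$, so that the rotation at $v$ and the signatures are untouched.

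Second, I would apply this lemma systematically. Choose a spanning tree $T$ of $G$ and process its edges in any order; for each tree edge $e$ and each vertex $v$ not incident to $e$, the hypothesis supplies that every edge at $v$ crosses $e$ evenly, so the lemma applies and all crossings between $e$ and edges at $v$ disappear. Iterating over all tree edges and all non-incident vertices produces a drawing in which $T$ is crossing-free and hence embedded in $S$. Cutting $S$ along $T$ then reduces the task to redrawing the non-tree edges inside the resulting faces; all remaining pairs of edges still cross evenly, and their homotopy classes in $S \setminus T$ are dictated by the embedding scheme, so a further round of the same redrawing lemma, applied within each face, clears the last crossings and produces the desired embedding realizing the prescribed rotation system and signatures.

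The main obstacle is the bookkeeping required by the redrawing lemma. One must verify that each local move (a) really does return the edge endpoint to its original position on $\partial U$ so that the rotation at $v$ is not permuted, (b) does not flip an edge signature on a non-orientable surface, which is handled by choosing the direction of the drag consistently with the local orientation inherited from $e$, and (c) does not reintroduce a crossing between $e$ and some previously-settled tree edge $e'$, which follows because the move is supported in a thin tube around $e$ and $\partial U$ and the parity of any already-settled pair is preserved. Once these checks are in place, the induction goes through and yields the embedding on $S$ with the given embedding scheme.
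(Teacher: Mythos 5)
Your plan hinges on a ``redrawing lemma'' that is not correct as stated, and whose proof sketch fails at the key point. The finger move you describe --- extending the end of an edge $f$ at $v$ along a tongue that runs parallel to $e$ and loops around an endpoint $w$ of $e$ --- does not change only the $f$--$e$ crossing parity: a tongue that wraps around $w$ crosses \emph{every} edge end incident to $w$, so the parity of $f$ with each other edge at $w$ is toggled as well, violating your condition (ii). More fundamentally, such a move is supported away from the existing crossings of $f$ with $e$, so it cannot ``kill all crossings'' between $e$ and the edges at $v$: it only adds new crossings along the tongue. Applied an odd number of times it makes $f$ and $e$ cross oddly (the wrong direction, since your hypothesis is that they cross evenly); applied an even number of times it changes no parity and removes no crossing. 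Converting an evenly-crossing pair into a genuinely non-crossing pair is precisely the hard content of the theorem, and no local move confined to a neighbourhood of $v$ and a tube around $e$ can do it, so stage one of your argument (a crossing-free spanning tree) is not established. The same confusion of parity with actual crossing count appears in your check (c): preserved parity of a ``settled'' pair only guarantees an even number of new crossings, not zero, so the invariant that $T$ stays crossing-free is not maintained as you iterate.

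The second stage is also not a reduction. Cutting $S$ along an embedded tree $T$ does not decompose $S$ into disks (a regular neighbourhood of a tree is a disk, so $S$ cut along $T$ still carries all of the genus and crosscaps), so ``redrawing the non-tree edges inside the resulting faces'' leaves you with essentially the original problem on a surface with boundary, and the appeal to homotopy classes ``dictated by the embedding scheme'' is not backed by any argument. The known proof --- and the one mirrored in this paper for Theorems~\ref{thm:HT_for_cycles} and~\ref{thm:HTC_for_surfaces} --- goes by induction on the number of edges via contraction: pick an edge $e=(u,v)$ between distinct vertices, drag all edge ends at $u$ along a narrow strip around $e$ onto $v$ (this preserves evenness of all pairs), apply induction to the contracted graph, and then split $v$ back; when only loops remain, evenness forces some loop to have its two ends consecutive in the rotation, so it can be removed, the rest embedded by induction, and the loop reinserted without crossings. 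I would encourage you to rebuild your argument along those lines rather than via parity-toggling finger moves.
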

%
%

Unlike the weak Hanani-Tutte theorem, the strong Hanani-Tutte theorem does not hold for all surfaces~\cite{fulek2019counterexample}. 
There are several generalizations of both weak and strong Hanani-Tutte theorems, see~\cite{schaefer2013hanani}. 
Loebl and Masbaum~\cite{LoeblM2011} gave a generalization for even subgraphs.
\begin{theorem}[Loebl and Masbaum~\cite{LoeblM2011}]
    If a graph can be drawn so that every even subgraph has an even number of self-crossings, then the graph can be embedded in the plane without changing the rotation system.
    \label{thm:LoeblM2011}
\end{theorem}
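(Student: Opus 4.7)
The plan is to derive Theorem~\ref{thm:LoeblM2011} as a direct consequence of the main cycle-crossing theorem of this paper (announced in the abstract). The reduction is a short parity calculation: I will show that if every even subgraph has an even number of self-crossings, then every pair of cycles has even crossing number in the sense defined in the paper, and then invoke the main theorem to produce the planar drawing.

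Let $C_1,C_2$ be two cycles of $G$ and partition their edges into $A=E(C_1)\setminus E(C_2)$, $B=E(C_2)\setminus E(C_1)$, and $D=E(C_1)\cap E(C_2)$. For disjoint edge sets $X,Y$ write $\mathrm{cr}(X,Y)$ for the number of crossings between an edge of $X$ and an edge of $Y$, and for any edge set $X$ write $\mathrm{sc}(X)$ for the number of self-crossings of $X$. Splitting the crossings of $C_1$, $C_2$, and $C_1\triangle C_2$ according to which of $A,B,D$ the two edges at each crossing belong to yields
\begin{align*}
\mathrm{sc}(C_1) &= \mathrm{sc}(A)+\mathrm{sc}(D)+\mathrm{cr}(A,D),\\
\mathrm{sc}(C_2) &= \mathrm{sc}(B)+\mathrm{sc}(D)+\mathrm{cr}(B,D),\\
\mathrm{sc}(C_1\triangle C_2) &= \mathrm{sc}(A)+\mathrm{sc}(B)+\mathrm{cr}(A,B).
\end{align*}
Summing these three equalities modulo $2$, every $\mathrm{sc}$ term on the right cancels in pairs, leaving
\[
\mathrm{sc}(C_1)+\mathrm{sc}(C_2)+\mathrm{sc}(C_1\triangle C_2)\equiv \mathrm{cr}(A,B)+\mathrm{cr}(A,D)+\mathrm{cr}(B,D)\pmod{2}.
\]
The right-hand side is exactly the paper's crossing number between $C_1$ and $C_2$: a crossing ``involves at least one edge from each cycle'' precisely when it contributes to $\mathrm{cr}(A,B)$, $\mathrm{cr}(A,D)$, or $\mathrm{cr}(B,D)$, since $A$--$A$ and $B$--$B$ crossings use edges from only one cycle, while $D$--$D$ crossings are explicitly excluded by the paper's definition.

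Since $C_1$, $C_2$, and $C_1\triangle C_2$ are all even subgraphs of $G$ (symmetric differences of even subgraphs being even), the Loebl--Masbaum hypothesis forces the left-hand side to be even, so every pair of cycles in $G$ has even crossing number in the given drawing. Applying the main cycle-crossing theorem then yields a planar drawing of $G$. The step I expect to be the main obstacle is not the parity translation above but matching the full strength of the Loebl--Masbaum conclusion that the rotation system is preserved: this requires the paper's main theorem to be stated (or easily adapted) so that the resulting planar drawing uses the same rotation system as the input drawing, in line with the rotation-preserving conclusion of Theorem~\ref{thm:HT_for_surfaces}. Provided that strengthening is in force, the reduction above completes the proof.
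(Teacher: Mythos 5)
Your proposal is correct and follows essentially the same route as the paper: the paper's Section on deriving Loebl--Masbaum proves (in contrapositive form) exactly your parity identity, namely that $sc(C_1)+sc(C_2)+sc(C_1\triangle C_2)\equiv\sigma_1(C_1,C_2)\pmod 2$ with $C_1\triangle C_2$ as the even subgraph, and then invokes Theorem~\ref{thm:HT_for_cycles}/\ref{thm:HTC_for_surfaces}, whose stated conclusion already preserves the rotation system, so the concern you flag is resolved by the theorem as stated. Your mod-2 summation of the three symmetric identities is just a slightly cleaner packaging of the paper's equation~\ref{eq:scG'}.
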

Schaefer~\cite{PelsmajerSS09} generalized the above result for arbitrary surfaces.
\begin{theorem}[Schaefer~\cite{schaefer2013hanani}]
    If a graph can be drawn in surface S so that every even subgraph has an even number of self-crossings, then G can be embedded in S without changing the embedding scheme.
    \label{thm:LoeblM2011_schaefer2013hanani}
\end{theorem}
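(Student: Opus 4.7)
The plan is to deduce Theorem~\ref{thm:LoeblM2011_schaefer2013hanani} from the cycle-based Hanani-Tutte theorem for surfaces that is the main contribution of this paper (as described in the abstract). Concretely, given a drawing $D$ of $G$ on $S$ in which every even subgraph has an even number of self-crossings, I would show that for every pair of cycles $C_{1}, C_{2}$ in $G$ the cycle-crossing number $\mathrm{cr}(C_{1},C_{2})$ defined in the abstract is even in $D$, and then invoke the surface version of that cycle-based theorem to embed $G$ in $S$ while preserving the embedding scheme.

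The key identity driving the reduction is
\[
\mathrm{cr}(C_{1},C_{2}) \;\equiv\; \mathrm{scr}(C_{1}) + \mathrm{scr}(C_{2}) + \mathrm{scr}(C_{1}\,\triangle\, C_{2}) \pmod{2},
\]
where $\mathrm{scr}(\cdot)$ denotes the total number of self-crossings of an even subgraph and $\triangle$ is symmetric difference of edge sets. To verify it, I would split $E(C_{1}) \cup E(C_{2})$ into $A = E(C_{1}) \cap E(C_{2})$, $B_{1} = E(C_{1})\setminus A$, and $B_{2} = E(C_{2})\setminus A$, and expand each of the four quantities as a sum of pairwise crossing counts $X(\cdot,\cdot)$ between these three classes. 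Modulo $2$ the within-class terms $X(A,A)$, $X(B_{1},B_{1})$, and $X(B_{2},B_{2})$ cancel, leaving $X(A,B_{1}) + X(A,B_{2}) + X(B_{1},B_{2})$ on both sides; this matches $\mathrm{cr}(C_{1},C_{2})$ precisely because the abstract's definition excludes the $X(A,A)$ contribution (crossings between edges common to both cycles).

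Now $C_{1}$, $C_{2}$, and $C_{1}\,\triangle\, C_{2}$ are all even subgraphs of $G$, since the symmetric difference of two cycles decomposes into edge-disjoint cycles and so has even degree everywhere. Therefore the hypothesis of Theorem~\ref{thm:LoeblM2011_schaefer2013hanani} forces each of the three self-crossing counts on the right of the identity to be even, whence $\mathrm{cr}(C_{1},C_{2})$ is even for every pair of cycles in $G$. Applying the cycle-based Hanani-Tutte theorem for surfaces to $D$ then yields the desired embedding of $G$ on $S$ preserving the embedding scheme.

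The main obstacle is essentially bookkeeping: one has to fix a precise convention for ``self-crossings of an even subgraph'' (in particular, how a crossing of an edge with itself is counted, and how such contributions behave when an edge is removed from the subgraph), and confirm that under that convention the within-class terms genuinely cancel modulo $2$ across the three self-crossing counts. Once this is pinned down, the proof collapses to the identity above together with a direct appeal to the paper's main result.
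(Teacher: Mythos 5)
Your proposal is correct and follows essentially the same route as the paper: the identity $\sigma_1(C_1,C_2)\equiv sc(C_1)+sc(C_2)+sc(C_1\,\triangle\,C_2)\pmod 2$ you derive via the $A$, $B_1$, $B_2$ decomposition is exactly the paper's equation for $sc(G')$ with $G'=(C_1\cup C_2)\setminus(C_1\cap C_2)$, stated there in contrapositive form as a lemma, and both arguments then conclude by applying Theorem~\ref{thm:HTC_for_surfaces}.
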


Cairns and Nikolayevsky~\cite{cairns2000bounds} proved a Hanani-Tutte type result involving cycles of graphs.
In order to state the result we will need the following definition.

\begin{definition}
    Let $C_1$ and $C_2$ be two cycles of a graph $G$.
    Consider $C_1\cap C_2$.
    Pick a maximal connected component $L$ of $C_1\cap C_2$.
    If $L$ is a vertex, then if the ends of $C_1$ and $C_2$ alternate at $L$, we set $\phi(L)=1$, otherwise we set $\phi(L)=0$.
    If $L$ is a path, then if after contracting $L$, the ends of $C_1$ and $C_2$ alternate at $L$, we set $\phi(L)=1$, and otherwise we set $\phi(L)=0$.
    We define the crossing number $\sigma(C_1,C_2)$ to be $\sum\phi(L)\mod 2$, where the sum is taken over all maximal connected components of $C_1\cap C_2$.
    \label{def:CN}
\end{definition}
Now we state the theorem by Cairns and Nikolayevsky, as stated in \cite{schaefer2013hanani}.
\begin{theorem}[Cairns and Nikolayevsky~\cite{cairns2000bounds}]
    If a graph $G$ can be drawn such that for every two cycles $C_1$ and $C_2$, we have $\sigma(C_1,C_2)$ to be even, then $G$ is planar and can be embedded without changing its rotation system.
    \label{thm:CN}
\end{theorem}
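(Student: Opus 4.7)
The plan is to deduce Theorem \ref{thm:CN} from the authors' main result (stated in the abstract) by a parity-matching argument: I would show that for every pair of cycles $C_1, C_2$ in a drawing $D$ of $G$, the Cairns--Nikolayevsky quantity $\sigma(C_1,C_2)$ from Definition \ref{def:CN} agrees modulo $2$ with the drawing crossing number between $C_1$ and $C_2$ in the sense of the abstract (crossings involving one edge from each cycle, minus the crossings between two common edges). Once this parity equivalence is established for every pair of cycles, the hypothesis of Theorem \ref{thm:CN} becomes exactly the hypothesis of the authors' planar cycle Hanani--Tutte theorem, which outputs a planar embedding preserving the rotation system.

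To prove the parity equivalence, I would work locally, one maximal connected component $L$ of $C_1\cap C_2$ at a time. In a small neighborhood of $L$, perturb $C_2$ slightly off $C_1$ so that in that neighborhood the two cycles use disjoint edge sets. A direct analysis of the rotation at $L$ (or at the vertex obtained by contracting $L$ when $L$ is a path) shows that this perturbation produces exactly $\phi(L)$ crossings modulo $2$: alternating ends force one unavoidable local crossing, while non-alternating ends permit a crossing-free separation. Performing the perturbation simultaneously at every component of $C_1\cap C_2$ yields edge-disjoint closed curves $\tilde C_1,\tilde C_2$ whose drawing crossings are congruent mod $2$ to $\sum_L \phi(L)=\sigma(C_1,C_2)$. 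On the other hand, relative to the original drawing the perturbation changes the count of crossings "one edge from each cycle, not both common" only by crossings between common edges (excluded by definition) and by locally paired crossings near each $L$ that cancel in pairs. Hence the drawing crossing number between $C_1$ and $C_2$ in the abstract's sense has the same parity as $\sigma(C_1,C_2)$.

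With the parity equivalence in hand, the hypothesis of Theorem \ref{thm:CN} implies the cycle-crossing hypothesis of the main theorem of this paper, and applying that theorem gives a planar drawing of $G$. Since both the perturbation argument and the main theorem respect the rotation at each vertex, the resulting embedding uses the original rotation system, which is the full conclusion of Theorem \ref{thm:CN}.

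The step I expect to be the main obstacle is the case where $L$ is a nontrivial path with many internal vertices. One must verify that only the two endpoints of $L$, where the perturbation separates $C_1$ from $C_2$, contribute to the parity, and that any crossings between an internal edge of $L$ (a common edge) and some other edge of $C_1\cup C_2$ are accounted for consistently under the exclusion "crossings between two common edges." This is the place where an unclean argument would most easily drift into a long case analysis; a careful edge-by-edge bookkeeping, tracking each crossing both before and after perturbation, should close the gap cleanly.
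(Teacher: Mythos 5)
Your overall route is the paper's own: deduce Theorem~\ref{thm:CN} from the cycle Hanani--Tutte theorem (Theorem~\ref{thm:HT_for_cycles}) by comparing $\sigma$ with $\sigma_1$ modulo $2$, doubling the common portions of the two cycles; this is exactly the content of Lemma~\ref{lem:equivalence}. However, your proof of the parity equivalence has a genuine gap: the identity $\sigma(C_1,C_2)\equiv\sigma_1(C_1,C_2)\pmod{2}$ is not a local statement about rotations at the components of $C_1\cap C_2$ --- it holds only because the drawing lives in the plane. What your perturbation argument actually yields is the paper's relation $\Omega(\tilde C_1,\tilde C_2)\equiv\sigma(C_1,C_2)+\sigma_1(C_1,C_2)\pmod{2}$: the crossings of the perturbed, non-touching curves split into the local contributions at the components $L$ (summing to $\sigma$) and the pre-existing crossings involving at least one non-common edge (summing to $\sigma_1$). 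Your two bookkeeping claims are mutually inconsistent: first you assert that the total crossings of $\tilde C_1,\tilde C_2$ are congruent to $\sigma$ (which silently drops the $\sigma_1$ part, i.e.\ precisely the quantity under investigation), and then you assert that the crossings created near each $L$ ``cancel in pairs'' (which contradicts your own earlier statement that a component with alternating ends contributes a single crossing, $\phi(L)=1$). Combining the two does produce the desired conclusion, but only because each claim is off by exactly the term the other one needs.

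The missing ingredient is the global topological fact that two disjoint closed curves in general position in the plane cross an even number of times, so that $\Omega(\tilde C_1,\tilde C_2)\equiv 0\pmod{2}$ and hence $\sigma\equiv\sigma_1\pmod{2}$. This input is unavoidable: on the torus, a meridian and a longitude drawn as two vertex-disjoint cycles cross exactly once, giving $\sigma=0$ but $\sigma_1=1$, so no surface-agnostic local analysis of the kind you describe can establish the equivalence. Once you insert this evenness fact, your argument becomes essentially the paper's Lemma~\ref{lem:equivalence} specialized to the plane (where $\Omega\equiv 0$), followed by an application of Theorem~\ref{thm:HT_for_cycles}, which is indeed how Theorem~\ref{thm:CN} follows from the paper's results; your handling of crossings between common edges (excluded by Definition~\ref{def:CG}, and contributing in pairs after doubling) is correct and matches the paper.
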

In fact, Cairns and Nikolayevsky~\cite{cairns2000bounds} proved a more general result for orientable surfaces, and later Pelsmajer, Schaefer,
and \v{S}tefankovi\v{c}~\cite{PelsmajerSS09} extended it to arbitrary surfaces.
\begin{theorem}[Generalization of Theorem~\ref{thm:CN}~\cite{cairns2000bounds,PelsmajerSS09}]
    Let $G$ be a graph and $D$ be a drawing of $G$ on the surface $S$.
    For any two cycles $C_1$ and $C_2$, let $c_1$ and $c_2$ be two curves isotopic to the drawings of $C_1$ and $C_2$ that cross finitely but do not touch. 
    If for every pair of cycles $C_1$ and $C_2$ in $G$, we have $\sigma(C_1,C_2) \equiv \Omega(c_1,c_2)\mod 2$ \footnote{For any two closed curves $c_1$, $c_2$ on a surface $S$ that do not touch or overlap but can cross nonetheless, let $\Omega(c_1,c_2)$ denote the parity of the number of crossings of $c_1$ and $c_2$.} in $D$, then $G$ can be embedded on the plane maintaining the embedding scheme.
    \label{thm:CNS}
\end{theorem}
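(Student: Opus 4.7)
The plan is to derive Theorem~\ref{thm:CNS} from the Schaefer generalization (Theorem~\ref{thm:LoeblM2011_schaefer2013hanani}) of the Loebl--Masbaum theorem, by showing that the hypothesis of Theorem~\ref{thm:CNS} forces every even subgraph of $G$ to have an even number of self-crossings in $D$. Theorem~\ref{thm:LoeblM2011_schaefer2013hanani} then supplies the embedding of $G$ on $S$ with the prescribed embedding scheme, finishing the proof.

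First I would unpack the hypothesis for two edge-disjoint cycles $C_1,C_2$ of $G$. Write $\operatorname{cr}_D(C_1,C_2)$ for the number of crossings in $D$ between an edge of $C_1$ and an edge of $C_2$. If we take $c_i$ to be a tight generic perturbation of the $D$-drawing of $C_i$, then the crossings of $c_1$ and $c_2$ split into those inherited from $D$ and those the perturbation is topologically forced to introduce near shared vertices/paths; the latter contribute $\sigma(C_1,C_2)\bmod 2$ by Definition~\ref{def:CN}. Hence $\Omega(c_1,c_2)\equiv \operatorname{cr}_D(C_1,C_2)+\sigma(C_1,C_2)\pmod 2$, and the hypothesis collapses to $\operatorname{cr}_D(C_1,C_2)\equiv 0\pmod 2$ for every pair of edge-disjoint cycles.

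Now fix an even subgraph $H\subseteq G$ and decompose its edge set into an edge-disjoint union of cycles $C_1,\dots,C_k$, possible because every vertex of $H$ has even degree. Then
\[
\operatorname{sc}_D(H)\ \equiv\ \sum_{i=1}^{k}\operatorname{sc}_D(C_i)\ +\ \sum_{1\le i<j\le k}\operatorname{cr}_D(C_i,C_j)\ \equiv\ \sum_{i=1}^{k}\operatorname{sc}_D(C_i)\pmod 2,
\]
by the previous step. So the task reduces to showing that the single-cycle sum on the right is even.

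I expect the main obstacle to be precisely this single-cycle piece, since Definition~\ref{def:CN} is formulated only for pairs of distinct cycles and has no immediate interpretation as $\sigma(C,C)$. The natural attack is to introduce a second copy $C_i'$ of $C_i$, drawn as a tight perturbation of $C_i$ along a small transverse vector field on $S$, and to apply the pair hypothesis to $(C_i,C_i')$ in an auxiliary graph built from the two parallel copies; this links $\operatorname{sc}_D(C_i)\bmod 2$ with a topological twist invariant of $C_i$ on $S$. Summing over $i$ and using that $\deg_H(v)$ is even at every vertex --- so that the local twist contributions at each repeated vertex of $H$ pair off --- should make $\sum_i \operatorname{sc}_D(C_i)$ vanish mod $2$. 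Formalizing the "parallel copy'' construction as a genuine pair of cycles to which the hypothesis legitimately applies, and showing the final parity depends only on $H$ rather than on the chosen decomposition, is where most of the combinatorial bookkeeping will lie.
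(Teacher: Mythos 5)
Your reduction goes in the wrong logical direction, and the step you yourself flag as the ``main obstacle'' is not just hard --- it is false. The hypothesis of Theorem~\ref{thm:CNS} does not imply the hypothesis of Theorem~\ref{thm:LoeblM2011_schaefer2013hanani}. Concretely, take $G$ to be a single triangle and draw it in the plane so that two of its edges cross exactly once. There is no pair of distinct cycles, so the condition $\sigma(C_1,C_2)\equiv\Omega(c_1,c_2)\bmod 2$ holds vacuously; yet the triangle itself is an even subgraph with an odd number of self-crossings, so Theorem~\ref{thm:LoeblM2011_schaefer2013hanani} simply does not apply. This kills the reduction at exactly the point you identified: the sum $\sum_i \mathrm{sc}_D(C_i)$ over the cycles of an edge-disjoint decomposition of an even subgraph is not controlled by the pairwise hypothesis. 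Your proposed fix --- pairing each $C_i$ with a tight parallel copy $C_i'$ and applying the hypothesis to $(C_i,C_i')$ --- cannot be made legitimate, because $C_i'$ is not a cycle of $G$ and the hypothesis of Theorem~\ref{thm:CNS} quantifies only over cycles of $G$ in the given drawing $D$; there is no auxiliary graph to which the hypothesis transfers. Indeed, the correct relationship between the two statements is the opposite of the one you need: the paper's Lemma in Section~\ref{sec-thm:LoeblM2011} shows that ``every even subgraph has even self-crossings'' implies ``every pair of cycles crosses evenly,'' i.e.\ the even-subgraph condition is the \emph{stronger} hypothesis, and the triangle example shows the converse fails.

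For comparison, the paper obtains Theorem~\ref{thm:CNS} along a different path: Lemma~\ref{lem:equivalence} shows that the condition $\sigma(C_1,C_2)\equiv\Omega(c_1,c_2)\bmod 2$ for all pairs is equivalent to $\sigma_1(C_1,C_2)$ being even for all pairs (your first step is essentially this lemma restricted to edge-disjoint pairs, and is fine), and then Theorem~\ref{thm:HTC_for_surfaces} --- proved directly by the edge-contraction induction, with the all-loops case handled by Theorem~\ref{thm:HT_for_surfaces} --- supplies the embedding preserving the embedding scheme. If you want a proof not relying on that induction, you would need an ingredient that controls single-cycle self-crossing parity only up to the rotation-system-preserving moves (as in the Cairns--Nikolayevsky or Pelsmajer--Schaefer--\v{S}tefankovi\v{c} arguments), not the Loebl--Masbaum hypothesis, which is genuinely stronger than what you are given.
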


\section{Our results}
We shall follow the standard conventions regarding graph drawings: the graphs we shall consider here are finite, in every drawing there are no edges that touch each other, there is at most one crossing that happens at a point, no crossings happen at vertices, no vertex lies in the interior of an edge, and there are finitely many crossings in a drawing.
Before stating our results, we need the following definitions from~\cite{PelsmajerSS09}.

\begin{definition}[Rotation system and embedding scheme]
    \label{def:rs}
    Let $G$ be a graph and $D$ be a drawing of $G$ on a surface $S$.
    The {\em rotation at a vertex} $v$ is is the cyclic order of the edges at $v$.
    The {\em rotation system} of $D$ is the collection of rotations.
    If $S$ is orientable, then the rotations can be chosen to be clockwise with respect to a fixed side of $S$.
    Otherwise, we define the local rotations to be clockwise with respect to one side of $S$ locally near the vertices.
    The clockwise direction at a vertex with respect to a local side of $S$ is called the {\em local orientation} at the vertex.
    The local orientation can be transferred along the surface locally from one vertex $u$ to another vertex $v$ along an edge $e=(u,v)$.
    Then the orientations at $u$ and $v$ will either agree or are reversed.
    An {\em embedding scheme} is a rotation system plus a signature $\lambda:E\to\{-1,1\}$, where $\lambda(e)=1$ if the sense of clockwise rotation at $u$ and $v$ agrees along $e$, and $\lambda(e)=-1$ otherwise.
\end{definition}

In Definition \ref{def:CN}, the definition of crossings between two cycles involves the rotational order at the vertices.
We give a more combinatorial definition of crossings between cycles.

\begin{definition}
Let $G=(V,E)$ be a graph and $D$ be a drawing of $G$ on a surface $S$.
For any subgraph $G'$ of $G$, let $E(G')$ denote the edges of $G'$.
For any two subgraphs $G_1$ and $G_2$ in $G$, let
$$
    E(G_{1}, G_{2}) := \left\{(e_1,e_2) \in E(G_{1})\times E(G_{2}) \mid 
    \left\{ e_{1}, e_{2}\right\} \not\subseteq E(G_{1})\cap E(G_{2})
    \right\}.
$$    
We define the crossing number between $G_1$ and $G_2$ as the sum of the number of times a pair of edges in the set $E(G_{1}, G_{2})$ crosses in the drawing $D$. In other words, the crossing number between $G_1$ and $G_2$ is the number of crossings between $G_1$ and $G_2$ except the crossings of the edges that are in both $G_1$ and $G_2$.
We denote the crossing number between $G_1$ and $G_2$ by $\sigma_1(G_1,G_2)$.
\label{def:CG}
\end{definition}

The proof by Pelsmajer, Schaefer and \v{S}tefankovi\v{c}~\cite{PelsmajerSS07,PelsmajerSS09} of the Weak Hanani-Tutte theorem consisted of the technique of contracting an edge of a graph and using induction on the resulting graph if the graph had any non-loop edges.
We show that the same technique can be used to prove the following generalization of the weak Hanani-Tutte theorem:

\begin{theorem}
Let $G=(V,E)$ be a graph.
Then if $G$ has a drawing in which for every pair of cycles $C_1,C_2$, $\sigma_1(C_1,C_2)$ is even, then $G$ is planar and can be embedded in the plane without changing its rotation system.
\label{thm:HT_for_cycles}
\end{theorem}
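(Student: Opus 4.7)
The plan is to mirror the Pelsmajer--Schaefer--\v{S}tefankovi\v{c} proof of the Weak Hanani--Tutte theorem and induct on the number of non-loop edges of $G$. The base case is a graph whose only edges are loops; here the hypothesis reduces to pairs of loops crossing evenly, and a direct Hanani--Tutte-style argument redraws each bouquet of loops at a vertex without crossings while preserving the rotation. For the inductive step, I would pick a non-loop edge $e=uv$ and form the drawing $D'$ of $G':=G/e$ by sliding $v$, together with each edge $f\neq e$ incident to $v$, along $e$ onto $u$ and then deleting $e$. Every cycle $C'$ of $G'$ lifts uniquely to a cycle $C$ of $G$: either $C=C'$ (when the two edges of $C'$ at the contracted vertex come from the same endpoint of $e$, or when $C'$ avoids the contracted vertex) or $C=C'\cup\{e\}$ (when they come from opposite endpoints of $e$).

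The core technical step is to verify that $\sigma_1(C_1',C_2')$ remains even in $D'$ for every pair of cycles $C_1',C_2'$ in $G'$. The contraction changes crossing parities in a controlled way: for an edge $f\neq e$ incident to $v$ and any edge $g$ not incident to $v$,
\[
\mathrm{cr}_{D'}(f,g)\;\equiv\;\mathrm{cr}_D(f,g)+\mathrm{cr}_D(e,g)\pmod{2},
\]
while (after a small perturbation near $u$) other edge-edge crossing parities are unchanged. I would then expand $\sigma_1(C_1',C_2')_{D'}$ using this formula and show it is congruent to $\sigma_1(C_1,C_2)_D$ modulo $2$, where $C_1,C_2$ are the lifts. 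When a lift $C_i$ does not contain $e$ but meets $v$, it uses exactly two edges at $v$, and the pair of $\mathrm{cr}_D(e,g)$ terms they contribute (for each edge $g$ of the other cycle not at $v$) cancels modulo $2$. When a lift $C_i$ contains $e$, the single $\mathrm{cr}_D(e,g)$ term contributed by sliding its unique non-$e$ edge at $v$ is exactly compensated by the pair $(e,g)$ that belongs to $E(C_1,C_2)$ in $G$ but drops out of $E(C_1',C_2')$ in $G'$. In every case the difference is $0$ modulo $2$, so the hypothesis of the theorem holds for $D'$, the inductive hypothesis yields a planar embedding of $G'$ whose rotation system agrees with $D'$, and reinserting $e$ locally at the contracted vertex using the rotations of $D$ at $u$ and $v$ produces a planar embedding of $G$ whose rotation system agrees with $D$.

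The main obstacle I foresee is the bookkeeping in the parity-preservation step: $\sigma_1$ excludes pairs of edges that lie in $E(C_1)\cap E(C_2)$, so when $C_1$ and $C_2$ share many edges, especially shared edges at $v$, one must separate the contributions of shared and unshared edges and verify that the leftover $\mathrm{cr}_D(e,\cdot)$ terms still pair up and cancel modulo $2$. A secondary subtlety is the base case itself, which for a bouquet of several loops at a single vertex is not a triviality and needs its own Hanani--Tutte-style redrawing lemma.
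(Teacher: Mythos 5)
Your proposal takes essentially the same route as the paper: induction on the number of edges, a Pelsmajer--Schaefer--\v{S}tefankovi\v{c}-style contraction of a non-loop edge $e$ with a verification that the parities of $\sigma_1$ are preserved (your two cancellation mechanisms---the paired $\mathrm{cr}_D(e,g)$ contributions from the two ends of a cycle through the contracted vertex that avoids $e$, and the compensation by the discarded pairs $(e,g)$ for a cycle containing $e$---are exactly the paper's Type-II and Type-I analyses), and a loops-only base case handled by a Hanani--Tutte-style redrawing, which the paper does by repeatedly removing a loop whose ends are consecutive in the rotation. The case bookkeeping you flag as the main remaining obstacle is precisely what the bulk of the paper's proof (the Type I--IV classification, including shared edges at the contracted vertex and self-crossings of $e$) carries out, and it goes through as you anticipate.
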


Theorem \ref{thm:HT_for_cycles} can be extended to arbitrary surfaces.
\begin{theorem}
If a graph $G$ can be drawn on a surface so that for every pair of cycles $C_1$ and $C_2$, $\sigma_1(C_1,C_2)$ is even, then $G$ can be embedded on that surface without changing its embedding scheme.
\label{thm:HTC_for_surfaces}
\end{theorem}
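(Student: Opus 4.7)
The plan is to reduce Theorem \ref{thm:HTC_for_surfaces} to the already-available Theorem \ref{thm:CNS} by establishing the parity identity
\begin{equation*}
    \sigma_1(C_1, C_2) \;\equiv\; \sigma(C_1, C_2) + \Omega(c_1, c_2) \pmod 2
\end{equation*}
for every pair of cycles $C_1, C_2$ in the given drawing $D$ of $G$ on the surface $S$, where $c_1, c_2$ are curves isotopic to the drawings of $C_1, C_2$ that cross finitely but do not touch. Once this identity is in hand, the hypothesis that $\sigma_1(C_1,C_2)$ is even for every pair of cycles immediately yields $\sigma(C_1,C_2) \equiv \Omega(c_1,c_2) \pmod 2$ for every pair, which is precisely the hypothesis of Theorem \ref{thm:CNS}; that theorem then gives an embedding of $G$ on $S$ without changing the embedding scheme.

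To prove the identity I would construct $c_1, c_2$ by a standard local perturbation. Along each maximal connected component $L$ of $C_1 \cap C_2$ (a single vertex or a shared path) I push $c_1$ to one side of the shared arc and $c_2$ to the other, keeping the two curves disjoint in a small neighborhood of the interior of $L$. By the very definition of $\phi(L)$ in Definition \ref{def:CN}, this introduces $\phi(L) \pmod 2$ new $c_1$--$c_2$ crossings at the endpoints of $L$, so the total contribution to $\Omega(c_1,c_2) \pmod 2$ from the perturbation equals $\sum_L \phi(L) = \sigma(C_1,C_2)$. Away from the shared components the crossings of $c_1, c_2$ come from the crossings of $D$ via a short case analysis on the edge pair involved: a crossing between two edges exclusive to different cycles contributes $1$ to both $\sigma_1$ and $\Omega$; a crossing between a shared edge and an exclusive edge contributes $1$ to both (the shared edge splits into two parallel copies, exactly one of which produces a $c_1$--$c_2$ crossing with the exclusive edge); and a crossing between two distinct shared edges contributes $0$ to $\sigma_1$ and $4 \equiv 0 \pmod 2$ to $\Omega$. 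Summing these contributions yields the identity $\Omega \equiv \sigma_1 + \sigma \pmod 2$.

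The main technical obstacle I anticipate is the consistent construction of the perturbation along a shared path that may pass through several vertices, especially when $S$ is non-orientable: one must choose a coherent \emph{left} and \emph{right} along the path, using the local orientations supplied by the embedding scheme, so that the pushed curves $c_1, c_2$ remain disjoint in the interior of $L$, and one must verify that the combined local count at each contracted component agrees with the purely combinatorial alternation measured by $\phi(L)$. Once the perturbation is set up correctly, the remainder is routine local bookkeeping at each crossing of $D$, and no further topology of $S$ enters the argument.
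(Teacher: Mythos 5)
Your proposal is correct, but it takes a genuinely different route from the paper. You reduce Theorem~\ref{thm:HTC_for_surfaces} to the known Theorem~\ref{thm:CNS} via the parity identity $\Omega(c_1,c_2)\equiv\sigma(C_1,C_2)+\sigma_1(C_1,C_2)\pmod 2$; this identity is exactly the content of the paper's Lemma~\ref{lem:equivalence}, which the paper proves separately (in the equivalence section) by the same perturbation-and-bookkeeping argument you sketch. The paper, by contrast, proves Theorem~\ref{thm:HTC_for_surfaces} directly and self-containedly: it inducts on the number of non-loop edges, contracts an edge $e=(u,v)$ exactly as in the proof of Theorem~\ref{thm:HT_for_cycles}, reuses that proof's case analysis to show the parity of $\sigma_1$ between every pair of cycles is preserved by the contraction, and when only loops remain invokes the weak Hanani--Tutte theorem for surfaces (Theorem~\ref{thm:HT_for_surfaces}), since distinct loops are distinct cycles and hence cross evenly by hypothesis. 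What each approach buys: yours is much shorter, but it leans on the deep prior result of Cairns--Nikolayevsky and Pelsmajer--Schaefer--\v{S}tefankovi\v{c} and so does not deliver what the paper explicitly aims for, namely an independent, purely edge-crossing-based proof (which, combined with Lemma~\ref{lem:equivalence}, yields a new proof of Theorem~\ref{thm:CNS} rather than a consequence of it); the paper's inductive proof is longer but self-contained modulo the weak Hanani--Tutte theorem. Two small points in your write-up: a crossing between two distinct shared edges produces $2$, not $4$, crossings between $c_1$ and $c_2$ after the perturbation (the other two are self-crossings of $c_1$ and of $c_2$), though the parity conclusion is unaffected; and you implicitly read the conclusion of Theorem~\ref{thm:CNS} as an embedding in the surface $S$, which is the intended meaning despite the statement in the paper saying ``the plane.''
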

In Section~\ref{sec-equivalence}, we show the equivalence between Theorem~\ref{thm:HTC_for_surfaces} and Theorem~\ref{thm:CNS}. An important thing to note is that our results are entirely in terms of edge crossing (a more combinatorial notion) which requires alternate proofs of these results. Due to this reason our results and their proofs provide a new lens to look at the previous results like Theorem~\ref{thm:CNS}.
Further, this notion of crossings between cycles provides a simple way to derive Theorem~\ref{thm:LoeblM2011} and Theorem~\ref{thm:LoeblM2011_schaefer2013hanani} from Theorem~\ref{thm:HTC_for_surfaces}, see Section~\ref{sec-thm:LoeblM2011}.

\section{Proof of Theorem~\ref{thm:HT_for_cycles} and Theorem \ref{thm:HTC_for_surfaces}}


\begin{figure}
    \begin{center}
    \includegraphics[width=0.485\textwidth]{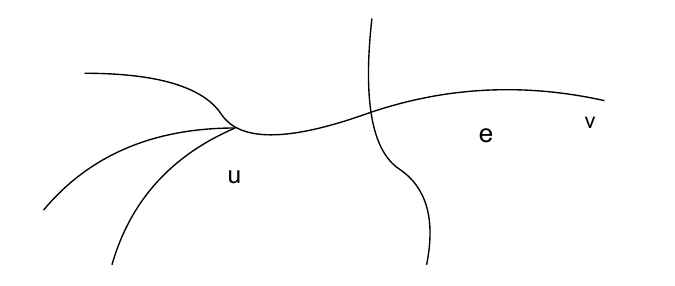}
        \includegraphics[width=0.485\textwidth]{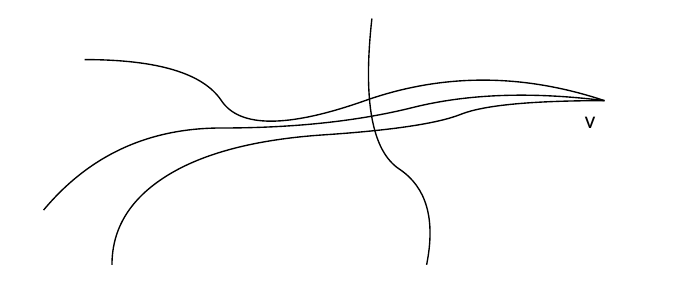}
        \caption{contracting the edge $e$}
    \label{fig:contract}
    \end{center}
\end{figure}

\begin{proof}[Proof of Theorem~\ref{thm:HT_for_cycles}]
In this proof, every mention of crossing between cycles is to be understood in the sense of Definition~\ref{def:CG}.
We prove the theorem by induction on the number of edges of $G$.
Our induction hypothesis is the following: if $G$ is a multigraph which has a drawing $D$ in which every pair of distinct cycles cross an even number of times, then there is a planar embedding of $G$ in which the rotation system of $D$ is maintained.

Our induction step consists of two parts: one where we deal with the case where there is an edge between two distinct vertices of $G$, and the other where there are multiple loops attached to one or more vertices of $G$ but no edge between distinct vertices.

Let $u,v$ be two distinct vertices of $G$ and let $e=(u,v)$ be an edge in $E$.
Let $E_u$ denote the edges connected to $u$.
We drag the ends of the edges in $E_u\setminus\{e\}$ that are connected to $u$ along $e$ through a narrow strip containing $e$ maintaining their rotational order and avoiding mutual intersections except when $e$ has self-crossings, and connect the ends to $v$, and remove $e$.
We make the strip narrow enough so that in $D'$, the extended edges of $E_u\setminus\{e\}$ inherit only the crossings that $e$ has with edges in $E\setminus\{e\}$ in $D$, see Figure \ref{fig:contract}.

For every self-crossing by $e$ in $D$, every distinct pair of edges in $E_u\setminus\{e\}$ cross each other an even number of times(see Figure \ref{fig:self_crossing_e}). 
Let $G'$ be the resulting graph, and $D'$ be the resulting drawing.
For every edge $e_i\in E\setminus\{e\}$, we denote the corresponding edge in $G'$ by $e'_i$, including the edges in $E_u\setminus\{e\}$ in which $u$ is replaced by $v$.
We claim that in $D'$, the crossing number between every pair of distinct cycles is even. To see why that is the case, let us divide the cycles in $G$ in the following categories:
\begin{itemize}
    \item Type-I cycle: a cycle in $G$ which contains the edge $e$
    \item Type-II cycle: a cycle in $G$ which contains $u$ but not $e$
    \item Type-III cycle: a cycle in $G$ which does not contain $u$ but crosses $e$
    \item Type-IV cycle: a cycle in $G$ which neither contains $u$ nor crosses $e$
\end{itemize}
\begin{figure}
    \begin{center}
    \includegraphics[width=0.40\textwidth]{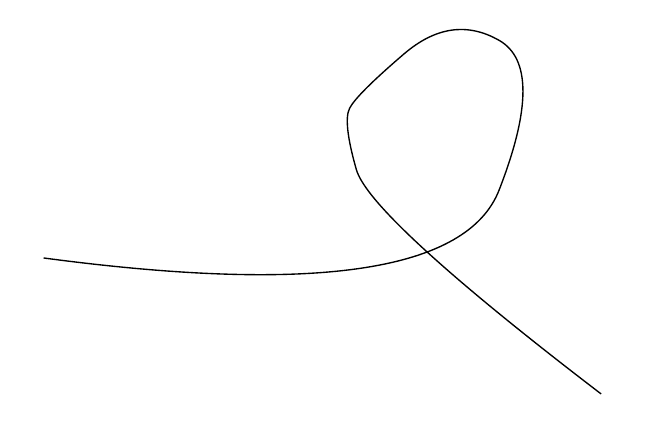}
    \includegraphics[width=0.4\textwidth]{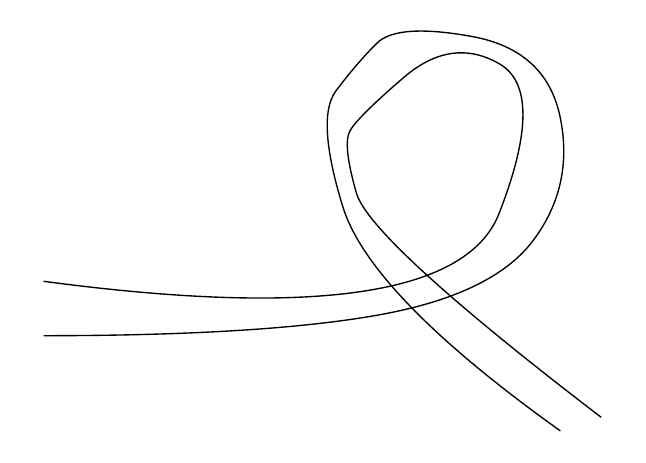}
    \caption{self-crossing by $e$ resulting in an even number of crossings between two distinct edges after the contraction of $e$}
    \label{fig:self_crossing_e}
\end{center}
\end{figure}

We investigate how the crossings with each type of cycles are affected by this contraction of $e$.
The idea behind most of the analysis below is that type-I cycles do not change much in shape from $D$ to $D'$, whereas the extension that type-II cycles get in $D'$ happens on pairs of ends of edges.
We do not consider self-crossings of $e$ separately since for any two distinct edges $e_1,e_2\in E_u\setminus\{e\}$, every self-crossing by $e$ in $D$ causes an even number of crossings between $e'_1$ and $e'_2$ in $D'$, so self-crossings by $e$ have no effect on the crossing numbers in $D'$.

\paragraph*{Type-I cycles} 
\begin{itemize}
    \item \textbf{crossings with other type-I cycles}: let $C_1$ and $C_2$ be two type-I cycles.
    Let, in the new graph $G'$, $C_1$ becomes $C'_1$ and $C_2$ becomes $C'_2$.
    Then there is exactly one edge $e_1$ of $C_1$ and one edge $e_2$ of $C_2$ that are in $E_u\setminus\{e\}$ and none of these two is a loop edge in $G$.
    Also, $E(C'_1)\setminus E(C'_2)=\{e'_i\mid e_i\in E(C_1)\setminus E(C_2)\}$, $E(C'_2)\setminus E(C'_1)=\{e'_i\mid e_i\in E(C_1)\setminus E(C_2)\}$, and $E(C'_1)\cap E(C'_2)=\{e'_i\mid e_i\in (E(C_1)\cap E(C_2))\setminus\{e\}$.
    \begin{itemize}
        \item\textbf{when $e_1\neq e_2$}: every crossing of $e$ by $E(C'_1)\setminus E(C'_2)$ is inherited by $e'_2$ in $D'$, and since $e_1$ is not in $E(C_2)$, crossings of $e'_1$ by $E(C'_1)\setminus E(C'_2)$ are not counted. Similarly, every crossing of $e$ by $E(C'_2)\setminus E(C'_1)$ is inherited by $e'_1$ in $D'$.
        Every crossing of $e$ by $C'_1\cap C'_2$ results in two crossings between $C'_1$ and $C'_2$ in $D'$, one with $e'_1$ and another with $e'_2$.
        \item\textbf{when $e_1=e_2$}: in this case, the crossings of $e$ by both $C'_1\setminus C'_2$ and $C'_2\setminus C'_1$ are inherited by $e'_1$. 
        The crossings of $e$ by $E(C_1)\setminus E(C_2)$ are crossings of $e_1$ by $E(C'_1)\setminus E(C'_2)$ in $D'$, and since $e'_1\in E(C'_1)\cap E(C'_2)$, they do not count towards the crossing number.
    \end{itemize}
    So clearly, in both of these cases, $\sigma_1(C_1,C_2)\equiv\sigma_1(C'_1,C'_2)\mod 2$.
    \item \textbf{crossings with type-II cycles}: let $C_1$ be a type-I cycle and $C_2$ be a type-II cycle.
    Then there is exactly one edge $e_1$ of $C_1$ in $E_u\setminus\{e\}$.
    For $C_2$, there are two ends of edges connected to $u$.
    $C_2$ can either be a loop or there are two distinct edges $e_2$ and $e_3$ of $C_2$ that are connected to $u$.
    Let, in the new graph $G'$, $C_1$ become $C'_1$ and $C_2$ become $C'_2$.
    Then $E(C'_1)\setminus E(C'_2)=\{e'_i\mid e_i\in E(C_1)\setminus (E(C_2)\cup\{e\})\}$, $E(C'_2)\setminus E(C'_1)=\{e'_i\mid e_i\in E(C_2)\setminus E(C_1)\}$ and $E(C'_1)\cap E(C'_2)=\{e'_i\mid e_i\in E(C_1)\cap E(C_2)\}$
    First consider the case where there are two distinct edges $e_2$ and $e_3$ of $C_2$ that are connected to $u$ in $G$).
    \begin{itemize}
        \item\textbf{when $e_1\neq e_2\neq e_3\neq e_1$}: $e_1$ is not in $C_2$, and $e_2,e_3$ are not in $C_1$.
        Every crossing of $e$ by $E(C_2)\setminus E(C_1)$ is then inherited by $e'_1$ in $D'$, and every crossing of $e$ by $E(C_1)\setminus (E(C_2)\cup\{e\})$ (which does not count towards the crossing number between $C_1$ and $C_2$ since $e\notin E(C_2)$) creates two crossings - one each on $e'_2$ and $e'_3$ - between $C'_1$ and $C'_2$.
        Every crossing of $e$ by $C_1\cap C_2$ (which is a crossing between $C_1$ and $C_2$ since $e\notin E(C_2)$) creates one crossing on $e'_1$ and one each on $e'_2$ and $e'_3$.
        \item\textbf{when $e_1=e_2\neq e_3$}: every crossing of $e$ by $E(C_2)\setminus E(C_1)$ is inherited by $e'_1$ in $D'$, and every crossing of $e$ by $E(C_1)\setminus (E(C_2)\cup\{e\})$ creates two crossings between $C'_1$ and $C'_2$ - one each on $e'_1$ and $e'_3$.
        Every crossing of $e$ by $E(C_1)\cap E(C_2)$ creates one crossing on $e'_3$ by $C'_1$.
    \end{itemize}
    Clearly, in both of these cases, $\sigma_1(C_1,C_2)\equiv\sigma_1(C'_1,C'_2)\mod 2$.
    Now we consider the case where $C_2$ is a loop consisting of only one edge $e_2$. 
    Then $e_2$ is not in $C_1$. 
    Every crossing of $e$ by $E(C_1)\setminus (E(C_2)\cup\{e\})$ creates two crossings on $e_2$ in $D'$ as two ends of $e_2$ are extended in $D'$.
    Every crossing of $e$ by $E(C_2)\setminus E(C_1)$ creates one crossing on $e_1$ in $D'$.
    Therefore, $\sigma_1(C_1,C_2)\equiv\sigma_1(C'_1,C'_2)\mod 2$.
    \item \textbf{crossings with type-III cycles}: let $C_1$ be a type-I cycle and $C_2$ be a type-III cycle.
    Then there is exactly one edge $e_1$ of $C_1$ in $E_u\setminus\{e\}$ and $e_1$ is not in $C_2$.
    Let, in the new graph $G'$, $C_1$ becomes $C'_1$ and $C_2$ becomes $C'_2$.
    Also, $E(C'_1)\setminus E(C'_2)=\{e'_i\mid e_i\in E(C_1)\setminus (E(C_2)\cup\{e\})\}$ and $E(C'_2)\setminus E(C'_1)=\{e'_i\mid e_i\in E(C_2)\setminus E(C_1)\}$.
    As $C_2$ and $C'_2$ are the same in both $D$ and $D'$, in $D'$ $e'_1$ inherits the crossings of $e$ by $C_2$. 
    Therefore, $\sigma_1(C_1,C_2)\equiv\sigma_1(C'_1,C'_2)\mod 2$.
    \item \textbf{crossings with type-IV cycles}: This analysis is trivial. 
\end{itemize}

\paragraph*{Type-II cycles}

\begin{itemize}
    \item\textbf{crossings with type-I cycles}: we have already shown in the previous section that the parity of crossing numbers between type-I and type-II cycles in $G$ does not change in $G'$.
    \item\textbf{crossings with other type-II cycles}: let $C_1$ and $C_2$ be two type-II cycles.
    Let, in the new graph $G'$, $C_1$ becomes $C'_1$ and $C_2$ becomes $C'_2$.
    Also, $E(C'_1)\setminus E(C'_2)=\{e'_i\mid e_i\in E(C_1)\setminus E(C_2)\}$, $E(C'_2)\setminus E(C'_1)=\{e'_i\mid e_i\in E(C_2)\setminus E(C_1)\}$, and $E(C'_1)\cap E(C'_2)=\{e'_i\mid e_i\in E(C_1)\cap E(C_2)\}$.
    Every type-II cycle in $G$ has two {\em ends} of edges attached to $u$.
    Since no crossings of $e$ by $C_1$ and $C_2$ count towards the crossing number between $C_1$ and $C_2$, it suffices to show that all crossings of $e$ by $C_1$ and $C_2$ add an even number of crossings between $C'_1$ and $C'_2$ in $D'$.
    \begin{itemize}
        \item\textbf{when there are two distinct edges $e_1,e_2$ of $C_1$ and two distinct edges $e_3,e_4$ of $C_2$ in $E_u$}: then every crossing of $e$ by $E(C_1)\setminus E(C_2)$ creates two crossings - one each on $e'_3$ and $e'_4$ - between $C'_1$ and $C'_2$.
        Similarly, every crossing of $e$ by $E(C_2)\setminus E(C_1)$ creates two crossings between $C'_1$ and $C'_2$.
        Note that only an even number of edges out of $e_1,e_2,e_3,e_4$ can be {\em not common} to both $C_1$ and $C_2$, i.e., either $e_1,e_2,e_3,e_4$ are all distinct, or $e_1\neq e_2=e_3\neq e_4\neq e_1$, or $e_1=e_3\neq e_2=e_4$.
        So every crossing of $e$ by $E(C_1)\cap E(C_2)$ creates an even number of crossings between $C'_1$ and $C'_2$.
        \item\textbf{when either $C_1$ or $C_2$ or both are a loop}: since for type-II loops, exactly two ends of an edge are extended along $e$ in $D'$, every crossing of $e$ creates an even number of crossings on type-II loops.
    \end{itemize}
    So, $\sigma_1(C_1,C_2)\equiv\sigma_1(C'_1,C'_2)\mod 2$.
    \item\textbf{crossings with type-III cycles}: let $C_1$ be a type-II cycle and $C_2$ be a type-III cycle.
    Then there are exactly two {\em ends} of edges of $C_1$ that are connected to $u$ in $D$.
    Also, no edges in $E_u$ are in $C_2$.
    Let, in the new graph $G'$, $C_1$ becomes $C'_1$ and $C_2$ becomes $C'_2$.
    Then every crossing of $e$ by $C_2$ creates two crossings between $C'_1$ and $C'_2$.
    So, $\sigma_1(C_1,C_2)\equiv\sigma_1(C'_1,C'_2)\mod 2$.
    \item\textbf{crossings with type-IV cycles}: this case is trivial.
\end{itemize}

Since the drawings of type-III and type-IV cycles remain unchanged in $D'$, the crossings between these cycles remain unchanged.
So now the induction hypothesis applies on $G'$ and we get a planar drawing of $G'$ that maintains the rotation system of $D'$.
Now we split $v$ in $D'$ into $u$ and $v$, transfer the ends of edges that were connected to $u$ in $G$ back onto $u$, and add an edge between $u$ and $v$ without introducing any crossings.
This is a planar drawing of $G$ that maintains the rotation system of $D$.

The case where all the edges of $G$ are loops are dealt with in the exact same way Pelsmajer et al do since in this case, all loops of $G$ are even edges, but for completeness, we add the details.
Let $u$ be a vertex of such a graph $G$ with edges connected to it. 
As all edges at $u$ are even edges since every cycle is a loop, if $e_1$ and $e_2$ are connected to $u$, the order in which the ends of $e_1$ and $e_2$ connect to $u$ cannot be $e_1,e_2,e_1,e_2$, as that would cause an odd number of intersections between $e_1$ and $e_2$.
Therefore, among the edges connected to $u$, we must find one edge $e$ whose ends are consecutive in the rotation system.
We remove that edge to obtain a new graph $G'$ with a new drawing $D'$, and apply the induction hypothesis on the newly formed graph to get a planar drawing of $G'$ that maintains the rotation system of $D'$.
We reintroduce $e$ according to the rotation system of $D$ creating no new crossings.
This gives a planar drawing of $G$.
Our base case consists of a graph $G$ with one edge.
\end{proof}


\begin{proof}[Proof of Theorem~\ref{thm:HTC_for_surfaces}]
Let $G=(V,E)$ be a multigraph and $D$ be a drawing of $G$ on a surface $S$ where every pair of cycle in $G$ cross an even number of times.
If $E$ only contains loops, then by Theorem \ref{thm:HT_for_surfaces}, $G$ has an embedding on $S$ that preserves its embedding scheme.
So we argue by induction on the number of edges in $E$ between distinct vertices.
If there is an edge $e\in E$ between two distinct vertices $u$ and $v$, then as in the proof of Theorem \ref{thm:HT_for_cycles}, contract $e$ to get a new graph $G'$ and a corresponding drawing $D'$.
Let two cycles $C_1,C_2$ in $G$ become $C'_1,C'_2$ in $G'$.
Then $\sigma_1(C_1,C_2)\equiv\sigma_1(C'_1,C'_2)\mod 2$ for any pair of cycles, as the same arguments given in the proof of Theorem \ref{thm:HT_for_cycles} apply.
Then $G'$ has an embedding on $S$ that preserves its embedding scheme, and we can reintroduce the edge $e$ without creating any crossings and maintaining the embedding scheme.
\end{proof}

\section{Equivalence between Theorem~\ref{thm:HTC_for_surfaces} and Theorem~\ref{thm:CNS}}
\label{sec-equivalence}


The following result would directly imply the equivalence between Theorem~\ref{thm:HTC_for_surfaces} and Theorem~\ref{thm:CNS}.
\begin{lemma}
    Let $G$ be a graph and $C_1$ and $C_2$ be two cycles in $G$, and $D$ be a drawing of $G$ on a surface $S$.
    Then $\Omega(C_1,C_2) \equiv \sigma(C_1,C_2)\mod 2$ if and only if $\sigma_1(C_1,C_2)\equiv 0\mod 2$.
    \label{lem:equivalence}
\end{lemma}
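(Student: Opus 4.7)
The plan is to establish the identity
\[
\Omega(C_1, C_2) \;\equiv\; \sigma(C_1, C_2) + \sigma_1(C_1, C_2) \pmod{2},
\]
from which the biconditional in the lemma is immediate: the statement $\Omega \equiv \sigma \pmod{2}$ iff $\sigma_1 \equiv 0 \pmod{2}$ is just a restatement of this identity once both sides are reduced modulo $2$. So the whole lemma reduces to a single local bookkeeping equation relating the topological intersection number on $S$ to the edge-level data.

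To prove the identity, I would construct explicit curves $c_1', c_2'$ isotopic to the drawings $c_1, c_2$ of $C_1, C_2$ in $D$ that cross finitely but do not touch. The perturbation pushes $c_1$'s passage through each maximal connected component $L$ of $C_1 \cap C_2$ slightly to one local side of $L$, and $c_2$'s passage to the other side, so that along every common edge and at every common vertex the perturbed curves $c_1', c_2'$ run parallel rather than coincident, while away from $C_1 \cap C_2$ they agree with $c_1, c_2$.

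I would then count the crossings of $c_1'$ and $c_2'$ modulo $2$ as a sum of four local contributions coming from $D$. First, each crossing between an edge in $E(C_1) \setminus E(C_2)$ and an edge in $E(C_2) \setminus E(C_1)$ persists as a crossing of $c_1'$ and $c_2'$, contributing $1$ to both $\Omega$ and $\sigma_1$. Second, each crossing between a non-common edge and a common edge persists as a crossing of the non-common edge with the ``opposite side'' of the parallelized common edge, again contributing $1$ to both $\Omega$ and $\sigma_1$. Third, each crossing (including self-crossings) involving only edges in $E(C_1) \cap E(C_2)$ produces exactly $2$ transverse crossings of $c_1'$ and $c_2'$ in the perturbation, by a direct local coordinate check; this contributes $0 \pmod{2}$ to $\Omega$, and is not counted in $\sigma_1$ by definition. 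Fourth, near each maximal component $L$ the perturbation introduces $\phi(L) \pmod{2}$ new crossings by the standard alternation argument: the two small arcs replacing the passages of $c_1$ and $c_2$ through $L$ are forced to cross iff the ends of $C_1$ and $C_2$ alternate around the contracted $L$. Summing over all $L$ gives $\sigma(C_1, C_2)$, and adding the four contributions gives the claimed identity.

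The main obstacle will be the bookkeeping in the third and fourth items. For the third, a careful local figure is needed to justify the ``two crossings cancel'' count for common-common edge crossings without appealing to any global orientation, so that the argument remains valid on non-orientable surfaces. For the fourth, the perturbation near a path component $L$ acts at both endpoints simultaneously and must be analyzed consistently with the ``after contracting $L$'' convention of Definition~\ref{def:CN}; in particular one must check that crossings of non-common edges with interior edges of $L$ are correctly absorbed into the second-item count and not double-counted at the endpoints. Both are essentially exhaustive case analyses, but they require care to ensure no crossings are miscounted across the boundary of the perturbed neighborhood of each $L$.
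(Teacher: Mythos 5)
Your proposal is correct and takes essentially the same approach as the paper: both establish the identity $\Omega(C_1,C_2)\equiv\sigma(C_1,C_2)+\sigma_1(C_1,C_2)\pmod{2}$ by replacing the drawn cycles with nearby disjoint curves and attributing each crossing locally to either a vertex-alternation term (giving $\sigma$) or an edge-pair crossing term (giving $\sigma_1$), with crossings among common edges contributing evenly. The only cosmetic difference is that the paper first removes the crossings of $c_1\cap c_2$ before perturbing, whereas you count each such crossing directly as two $c_1'$--$c_2'$ crossings; your local bookkeeping is, if anything, more explicit than the paper's.
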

\begin{proof}
    Let $D$ be a drawing of $G$ on the plane and let $c_1$ and $c_2$ denote the curves of $C_1$ and $C_2$ drawn on $S$. 
    Remove the self-crossings of $c_1\cap c_2$ and let the resulting curves be $c'_1$ and $c'_2$ respectively.
    Note that the self-crossings of $c_1\cap c_2$ are due to the {\em edges} in $E(C_1)\cap E(C_2)$ crossing.
    Let $C'_1$ and $C'_2$ be the cycles that correspond to the drawing of $c'_1$ and $c'_2$.
    It is easily seen that $\sigma_1(C_1,C_2)=\sigma_1(C'_1,C'_2)$, since the crossings of edges in $E(C_1)\cap E(C_2)$ do not affect $\sigma_1(C_1,C_2)$.
    Also, since crossings at vertices remain unaffected, we have $\sigma(C_1,C_2)=\sigma(C'_1,C'_2)$.
    Now, let $c''_1$ and $c''_2$ be two curves isotopic to $c'_1$ and $c'_2$ respectively that do not touch or overlap but can cross.
    Then, the crossings of $c'_1$ and $c'_2$ are due to either crossings at common vertices of $C'_1$ and $C'_2$, or crossings of pairs of edges in $E(C'_1)\times(E(C'_2)\setminus E(C'_1))$ or $E(C'_2)\times (E(C'_1)\setminus E(C'_2))$.
    Therefore, $\Omega(c''_1,c''_2)\equiv\sigma(C'_1,C'_2)+\sigma_1(C'_1,C'_2)\mod 2$.
    This proves the lemma.
\end{proof}
\section{Proof of Theorem \ref{thm:LoeblM2011} and Theorem~\ref{thm:LoeblM2011_schaefer2013hanani} from Theorem \ref{thm:HTC_for_surfaces}}
\label{sec-thm:LoeblM2011}

\begin{definition}
    For any subgraph $G'$ of a graph $G$ and a drawing $D$ of $G$ on a surface $S$, let $sc(G')$ denote the number of self-crossings of $G'$.
    \label{def:sc}
\end{definition}

The following lemma directly imply a proof of Theorem~\ref{thm:LoeblM2011} and Theorem~\ref{thm:LoeblM2011_schaefer2013hanani} from Theorem~\ref{thm:HTC_for_surfaces}.

\begin{lemma}
    Let $G$ be a graph and $D$ be a drawing of $G$ on a surface $S$.
    If $C_1,C_2$ be two cycles in $G$ for which $\sigma_1(C_1,C_2)$ is odd, then there is an even subgraph $G'$ of $G$ whose drawing in $D$ has an odd number of self-crossings.
\end{lemma}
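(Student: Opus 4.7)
The plan is to produce $G'$ by choosing among the three natural even subgraphs attached to $C_1$ and $C_2$ via a short $\mathbb{F}_2$ parity computation. Fix the partition $A := E(C_1) \setminus E(C_2)$, $B := E(C_2) \setminus E(C_1)$, $I := E(C_1) \cap E(C_2)$, and for edge sets $X, Y \subseteq E(G)$ let $\mathrm{cr}(X, Y)$ denote the number of crossings in $D$ between distinct edges $e_1 \in X$ and $e_2 \in Y$. Each of $C_1$, $C_2$, and the edge-wise symmetric difference $C_1 \triangle C_2$ is an even subgraph of $G$, since symmetric differences of even edge sets again have all vertex degrees even.

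First I would rewrite $\sigma_1(C_1, C_2)$ modulo $2$ in terms of the atomic counts $\mathrm{cr}(A, A), \mathrm{cr}(B, B), \mathrm{cr}(I, I), \mathrm{cr}(A, B), \mathrm{cr}(A, I), \mathrm{cr}(B, I)$. Unpacking Definition~\ref{def:CG}, a pair $\{e_1, e_2\}$ contributes to $\sigma_1(C_1, C_2)$ exactly when some ordering lies in $E(C_1) \times E(C_2)$ and the two edges do not both lie in $I$; a case split on where the endpoints lie shows that single-edge self-crossings, crossings within $A$, crossings within $B$, and crossings within $I$ each contribute $0$, while crossings of types $(A, B)$, $(A, I)$, $(B, I)$ each contribute exactly once. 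Thus
\[
\sigma_1(C_1, C_2) \equiv \mathrm{cr}(A, B) + \mathrm{cr}(A, I) + \mathrm{cr}(B, I) \pmod{2}.
\]

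Next I would expand $sc(C_1) + sc(C_2) + sc(C_1 \triangle C_2)$ modulo $2$ using the edge decompositions $E(C_1) = A \cup I$, $E(C_2) = B \cup I$, and $E(C_1 \triangle C_2) = A \cup B$. Grouping by type, each of $\mathrm{cr}(A, A), \mathrm{cr}(B, B), \mathrm{cr}(I, I)$ and every single-edge self-crossing appears in exactly two of the three summands and so cancels, while $\mathrm{cr}(A, B), \mathrm{cr}(A, I), \mathrm{cr}(B, I)$ each appear in exactly one summand. Combining with the previous identity yields
\[
sc(C_1) + sc(C_2) + sc(C_1 \triangle C_2) \equiv \sigma_1(C_1, C_2) \pmod{2}.
\]
Hence if $\sigma_1(C_1, C_2)$ is odd, at least one of the three even subgraphs $C_1$, $C_2$, $C_1 \triangle C_2$ has an odd number of self-crossings, and that subgraph serves as $G'$. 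The main obstacle I anticipate is the careful case analysis underlying the formula for $\sigma_1$: one must handle the asymmetric ordered-pair formalism of Definition~\ref{def:CG} so that crossings with exactly one edge in $I$ are counted once and not twice; once this is settled, the rest is a mechanical $\mathbb{F}_2$ cancellation.
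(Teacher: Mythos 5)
Your proposal is correct and follows essentially the same route as the paper: the candidate subgraph is the same symmetric difference $C_1\triangle C_2$ (the paper writes it as $(C_1\cup C_2)\setminus(C_1\cap C_2)$), and your key congruence $sc(C_1)+sc(C_2)+sc(C_1\triangle C_2)\equiv\sigma_1(C_1,C_2)\pmod 2$ is exactly the paper's expansion of $sc(G')$ read modulo $2$. The only difference is cosmetic: you do the bookkeeping symmetrically over the partition $A,B,I$ in $\mathbb{F}_2$, which absorbs the paper's preliminary case split on whether $sc(C_1)$ or $sc(C_2)$ is already odd.
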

\begin{proof}
    If one of $sc(C_1)$ and $sc(C_2)$ is odd, then we are done.
    If not, then let $G'=(C_1\cup C_2)\setminus (C_1\cap C_2)$.
    Then it is easily seen that $G'$ is an even subgraph of $G$.
    Let $B=C_1\cap C_2$.
    Then for $i=1,2$, we have:
    \begin{equation}
        sc(C_i)=sc(C_i\setminus B)+sc(B)+\sigma_1(C_i,B)
    \end{equation}
    Also, we have
    \begin{align}
        sc(G')
        &= sc(C_1\setminus B)+sc(C_2\setminus B)+\sigma_1(C_1\setminus B,C_2\setminus B) \nonumber\\
        &= sc(C_1)+sc(C_2)-2sc(B)-\sigma_1(C_1,B)-\sigma_1(C_2,B)
        +\sigma_1(C_1\setminus B, C_2\setminus B) \nonumber\\
        &= sc(C_1) + sc(C_2)-2sc(B) - \sigma_1(C_1,B) - \sigma_1(C_2,B) - \sigma_1(C_1\setminus B, C_2\setminus B) \nonumber\\
        & \quad + 2\sigma_1(C_1\setminus B,C_2\setminus B) \nonumber\\
        &= sc(C_1) + sc(C_2) - 2sc(B) - \sigma_1(C_1,C_2) + 2\sigma_1(C_1\setminus B,C_2\setminus B)
        \label{eq:scG'}
    \end{align}
    The last line of the above equation is due to the fact that $\sigma_1(C_1,C_2) = \sigma_1(C_1,B) + \sigma_1(C_2,B) + \sigma_1(C_1\setminus B,C_2\setminus B)$.
    From equation \ref{eq:scG'}, we get that $sc(G')$ is odd.
    This concludes the proof.
\end{proof}

\bibliographystyle{alpha}
\bibliography{bibliography}

\begin{thebibliography}{Han34}

\bibitem[CN00]{cairns2000bounds}
G.~Cairns and Y.~Nikolayevsky.
\newblock {Bounds for generalized thrackles}.
\newblock {\em Discrete \& Computational Geometry}, 23:191--206, 2000.

\bibitem[FK19]{fulek2019counterexample}
R.~Fulek and J.~Kyn{\v{c}}l.
\newblock {Counterexample to an extension of the Hanani-Tutte theorem on the
  surface of genus $4$}.
\newblock {\em Combinatorica}, 39(6):1267--1279, 2019.

\bibitem[Han34]{Hanani34}
H.~Hanani.
\newblock {\"{U}ber wesentlich unpl\"{a}ttbare Kurven im drei-dimensionalen
  Raume}.
\newblock {\em Fundamenta Mathematicae}, 23:135--142, 1934.

\bibitem[LM11]{LoeblM2011}
M.~Loebl and G.~Masbaum.
\newblock {On the optimality of the Arf invariant formula for graph
  polynomials}.
\newblock {\em Advances in Mathematics}, 226(1):332--349, 2011.

\bibitem[PSS07]{PelsmajerSS07}
M.~J. Pelsmajer, M.~Schaefer, and D.~Stefankovic.
\newblock Removing even crossings.
\newblock {\em J. Comb. Theory {B}}, 97(4):489--500, 2007.

\bibitem[PSS09]{PelsmajerSS09}
M.~J. Pelsmajer, M.~Schaefer, and D.~Stefankovic.
\newblock {Removing even crossings on surfaces}.
\newblock {\em Eur. J. Comb.}, 30(7):1704--1717, 2009.

\bibitem[Sch13]{schaefer2013hanani}
M.~Schaefer.
\newblock {Hanani-Tutte and related results}.
\newblock In {\em Geometry—Intuitive, Discrete, and Convex: A Tribute to
  L{\'a}szl{\'o} Fejes T{\'o}th}, pages 259--299. Springer, 2013.

\bibitem[Tut70]{Tutte70}
W.~T. Tutte.
\newblock {Toward a theory of crossing numbers}.
\newblock {\em Journal of Combinatorial Theory}, pages 45--53, 1970.

\end{thebibliography}
\end{document}